\title{Research Papers}
\author{James Tipton}
\newtheorem{thm}{Theorem}
\newtheorem{ex}[thm]{Example}
\newtheorem{prop}[thm]{Proposition}
\newcommand{\C}[0]{\mathbb{C}}
\title{Partial Classification of Polynomials and an Orthonormal Basis Construction on the Associated Basin of Attraction}
\begin{document}

\maketitle

\abstract{In the paper \textit{Infinite product representations for kernels and iterations of functions}, the authors associate certain Fatou subsets with reproducing kernel Hilbert spaces.  They also present a method for constructing an orthonormal basis for said Hilbert space, but the method depends on the polynomial of the given Fatou set. We provide a partial classification of those polynomials the method applies to.

\section{Introduction}

\subsection*{Complex Dynamics}

Recall that $R:\mathbb{C}\rightarrow\mathbb{C}$ has an attracting fixed point at $z_0\in\mathbb{C}$ if $|R^{'}(c)|<1$.  The point $z_0$ is called an attracting fixed point because all points within a certain neighborhood of $z_0$ are ``attracted'' to $z_0$ under repeated iteration of $R$.  The $n$th iterate of $R$ is denoted by
\[R^{\circ n}(z) = \underbrace{R\circ R\circ\cdots \circ R(z)}_{n\text{ times}}\]
The basin of attraction of $R$ at the attracting fixed point $z_0$ is the following subset of $\mathbb{C}$:
\[B_{R,z_0}=\{z\in\mathbb{C}:\lim_{n\rightarrow\infty}R^{\circ n}(z)=z_0\}\]
For many polynomials with an attracting fixed point, the basin of attraction is a fractal.\\

\subsection*{Reproducing Kernel Hilbert Spaces}

A reproducing kernel Hilbert space (RKHS) on $\mathbb{C}$ is a Hilbert space of functions on $\mathbb{C}$ in which every linear evaluation functional is bounded.  Uniquely associated to each RKHS is a kernel function $K : \C\times\C\rightarrow\C$ with the reproducing property:
\[\langle f(z), K(z,w) \rangle_{\mathcal{H}} = f(w)\]
Since a RKHS is, in particular, a Hilbert space, it must have an orthonormal basis (ONB).  Although ONBs are guaranteed to exist, explicitly constructing an ONB is a much harder task.  

\subsection*{Kernel Functions on Basins of Attraction}

If $R$ satisfies sufficient conditions, then one may construct a kernel function, represented as an infinite product, on a subset of $B_{R,z_0}$.  See [1] for the general result. In particular, if $R$ is a polynomial and $z_0 = 0$, then the map $K:\mathbb{C}\times\mathbb{C}\rightarrow\mathbb{C}$ defined by
\[K(z,w) = \prod_{n=0}^\infty\left(1+R^{\circ n}(z)\overline{R^{\circ n}(w)}\right)\tag{1}\]
is a kernel function on all of $B_{R,0}$ [3].  The infinite product involves iterates of the map $R$ and the map $1+z\overline{w}$, which is a kernel function on $\C$.  The kernel function $1 + z\overline{w}$ can be used to construct an ONB under certain circumstances.

\subsection*{The ONB Construction}
First we take a moment to recall multi-index notation.  Suppose $J$ is an index set, then
\[J^\infty = \{v: v\in J^N\text{ for some }N=1,2,\dots\}\]
Denote the RKHS associated to the previous kernel function $(1)$ by $\mathcal{H}$.  The constant function $\mathbf{1}(z) = 1$ plays a crucial role in the construction, and in fact belongs to $\mathcal{H}$.  Consider a family of operators on $\mathcal{H}$, $\{S_i:\mathcal{H}\rightarrow\mathcal{H}\}$.  For each $v=(v_1,v_2,\dots,v_N)\in J^N$, define $b_v:\mathbb{C}\rightarrow\mathbb{C}$
\[b_v(z)=(S_{v_1}S_{v_2}\cdots S_{v_N}\mathbf{1})(z)\]
The next theorem, due to the authors of [1], gives sufficient conditions for the functions $b_v$ to form an ONB.
\begin{thm}
If a family of operators $\{S_i:\mathcal{H}\rightarrow\mathcal{H}\}_{i=1}^N$ satisfies the Cuntz relations:
\[S_i^{*}S_j = \delta_{ij}I,\quad\sum_{i=1}^N S_iS_i^{*}=I\]
then $B = \{b_v: v\in J^\infty\}$ is an ONB for $\mathcal{H}$.
\end{thm}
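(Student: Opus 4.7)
The plan is to separate the statement into the two standard pieces of the definition of an orthonormal basis: first verify that the family $\{b_v : v \in J^\infty\}$ is orthonormal in $\mathcal{H}$, then verify that its closed linear span is all of $\mathcal{H}$. Throughout, I will treat $\mathbf{1}$ as a unit vector that is orthogonal to each $S_i\mathcal{H}$ (the standard ``wandering vector'' hypothesis in Cuntz representations); this should be implicit in the construction described in [1] and is needed for any such statement to be true.

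For orthonormality, fix $v=(v_1,\dots,v_k)$ and $w=(w_1,\dots,w_\ell)$ in $J^\infty$ and compute
\[
\langle b_v,\,b_w\rangle_{\mathcal{H}} \;=\; \langle S_{v_1}\cdots S_{v_k}\mathbf{1},\,S_{w_1}\cdots S_{w_\ell}\mathbf{1}\rangle_{\mathcal{H}} \;=\; \langle \mathbf{1},\,S_{v_k}^{*}\cdots S_{v_1}^{*}S_{w_1}\cdots S_{w_\ell}\mathbf{1}\rangle_{\mathcal{H}}.
\]
Then I would apply the first Cuntz relation $S_i^{*}S_j=\delta_{ij}I$ from the inside outward. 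If $v_1\neq w_1$ the innermost pair annihilates and the entire inner product vanishes; otherwise the pair collapses to $I$, and I can iterate. This reduction terminates in one of three ways: either we encounter a mismatch (giving $0$), or $v=w$ and everything collapses to $\langle \mathbf{1},\mathbf{1}\rangle=1$, or one multi-index is a proper prefix of the other and we are left with an expression of the form $\langle \mathbf{1},\,S_{w_{k+1}}\cdots S_{w_\ell}\mathbf{1}\rangle$, which vanishes because $\mathbf{1}\perp S_i\mathcal{H}$ for each $i$.

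For completeness, the key is to iterate the second Cuntz relation. Applying $I=\sum_i S_iS_i^{*}$ inside each factor repeatedly gives, for every $N\ge 1$,
\[
I \;=\; \sum_{v\in J^{N}} S_{v_1}\cdots S_{v_N} S_{v_N}^{*}\cdots S_{v_1}^{*}.
\]
Applying both sides to $\mathbf{1}$ yields a decomposition $\mathbf{1}=\sum_{|v|=N} S_{v_1}\cdots S_{v_N}c_v$ where the coefficients are controlled by adjoints acting on $\mathbf{1}$. The argument then proceeds by showing that an arbitrary $f\in\mathcal{H}$ can be written, via the same iteration applied to $f$ in place of $\mathbf{1}$, as a norm-convergent expansion whose partial sums lie in $\mathrm{span}\,B$; Parseval-type bookkeeping together with the orthonormality established in the previous step identifies the expansion coefficients as $\langle f,b_v\rangle_{\mathcal{H}}$.

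The main obstacle is the role played by $\mathbf{1}$. The Cuntz relations alone produce an isometric decomposition of $\mathcal{H}$ into orthogonal pieces $S_v\mathcal{H}$, but to obtain a \emph{basis} one must know that $\mathbf{1}$ is normalized and wandering, and that the cyclic subspace generated from $\mathbf{1}$ by the $S_i$ exhausts $\mathcal{H}$. Making each of those three properties precise in the specific reproducing-kernel setting of equation~(1) is where the real work lies; once they are in hand, the inside-out cancellation for orthonormality and the iterated resolution of the identity for completeness are essentially automatic.
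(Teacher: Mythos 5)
The paper itself does not prove this theorem; it is quoted from reference [1], so there is no internal proof to compare against, and your proposal has to stand on its own. It does not, because of a concrete wrong turn at the start: you posit that $\mathbf{1}$ is a wandering vector, i.e.\ $\mathbf{1}\perp S_i\mathcal{H}$ for each $i$. In the setting this theorem is used in, that hypothesis is false: $S_1f(z)=f(R(z))$ fixes constants, so $S_1\mathbf{1}=\mathbf{1}$ and $\mathbf{1}$ lies \emph{in} $S_1\mathcal{H}$. Worse, under your hypothesis the theorem you are trying to prove would be false: $J^\infty$ consists of nonempty words only, so every $b_v$ lies in some $S_{v_1}\mathcal{H}$, and a wandering $\mathbf{1}$ would be a nonzero vector orthogonal to all of $B$, killing completeness. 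The actual mechanism is the opposite of wandering: since $S_1\mathbf{1}=\mathbf{1}$ and $\|\mathbf{1}\|^2=K(0,0)=1$, the leftover term in your prefix case, $\langle\mathbf{1},\,S_{w_{k+1}}\cdots S_{w_\ell}\mathbf{1}\rangle$, is handled by writing $\mathbf{1}=S_1\mathbf{1}$ and applying $S_1^*S_{w_{k+1}}=\delta_{1,w_{k+1}}I$ repeatedly: it vanishes unless the suffix $(w_{k+1},\dots,w_\ell)$ consists entirely of $1$'s, and in that exceptional case $b_w=b_v$ as functions, so they are the same element of the set $B$ rather than two distinct vectors that must be orthogonal. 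Your inside-out cancellation for the mismatch and equal-word cases is fine, but the prefix case cannot be dispatched the way you propose.

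The completeness half is also incomplete in a way you partly acknowledge but do not repair. Iterating $\sum_i S_iS_i^*=I$ gives the resolution $I=\sum_{|v|=N}S_{v_1}\cdots S_{v_N}S_{v_N}^*\cdots S_{v_1}^*$, but applying it to $f$ expands $f$ over the subspaces $S_{v_1}\cdots S_{v_N}\mathcal{H}$, not over the one-dimensional spans of the $b_v$; to conclude that the closed span of $B$ is all of $\mathcal{H}$ you need $\mathbf{1}$ to be cyclic for the family $\{S_i\}$, and cyclicity is not a consequence of the Cuntz relations (any representation with a non-cyclic unit vector gives a counterexample to the purely algebraic statement). That input has to come from the concrete reproducing-kernel structure of $K$ in equation (1) (this is where the work in [1] and [3] lies), or else be added as an explicit hypothesis; as written, your argument establishes at best that $B$ is an orthonormal set spanning the cyclic subspace generated by $\mathbf{1}$, not an ONB for $\mathcal{H}$.
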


In our particular set-up, the family we wish to consider is $\mathcal{F}=\{S_1, S_2\}$ where $S_1f(z) = f(R(z))$ and $S_2f(z)=zf(R(z))$.  This family of operators can be shown to satisfy the Cuntz relations when certain conditions are met, which we discuss now.

\subsection*{The Dagger Conditions}

The family $\mathcal{F}$ of interest depends on the map $R$ that is chosen.  It can be shown that if $R$ satisfies for all $z\in B_{R,0}$,
\[M(z)<\infty\]
where $M(z)$ is the number of solutions to $R(\zeta)=z$, counting multiplicity, and either
\begin{align*}
\frac{1}{M(z)}\sum_{R(\zeta)=z}e_i(\zeta)\overline{e_j(\zeta)}=\delta_{ij},\qquad \forall i,j\in J\tag{$\dagger$}\\
\text{or}\hspace{1.5in}\\
\frac{1}{M(z)}\sum_{R(\zeta)=z}e_i(\zeta)e_j(\zeta)=\delta_{ij},\qquad \forall i,j\in J\tag{$\ddagger$},
\end{align*}
then $\mathcal{F}$ satisfies the Cuntz relations [1].  The functions, $e_i(z)$, are taken from any ONB for the RKHS associated to the underlying kernel function of the infinite product kernel function.  For the family $\mathcal{F}$ that we are interested in, we have that $e_1(z) = 1$ and $e_2(z) = z$; this comes from the underlying kernel function $1+z\overline{w}$ mentioned earlier.  For ease of exposition we will refer to the above conditions as the dagger conditions.  A natural question is when does the map $R$ satisfy either of the above conditions?\\

In the context of the underlying kernel function $1 + z\overline{w}$, the $\dagger$ condition becomes
\begin{align*}
\sum_{R(\zeta)=z}1 &= M(z) \tag{$\dagger_1$}\\
\sum_{R(\zeta)=z}\zeta &= 0 = \sum_{R(\zeta)=z} \tag{$\dagger_2$}\overline{\zeta}\\
\sum_{R(\zeta)=z}|\zeta|^2 &= M(z) \tag{$\dagger_3$},
\end{align*}
and the $\ddagger$ condition becomes
\begin{align*}
\sum_{R(\zeta)=z}1 &= M(z) \tag{$\ddagger_1$}\\
\sum_{R(\zeta)=z}\zeta &= 0 \tag{$\ddagger_2$}\\
\sum_{R(\zeta)=z}\zeta^2 &=M(z) \tag{$\ddagger_3$}
\end{align*}

We examine which polynomials $R$ satisfy the dagger conditions and offer a classification for $R$ to satisfy the $\ddagger$ condition.

\section{Partial Classification of the Dagger Conditions}

The purpose of the dagger conditions is to construct an ONB for the RKHS corresponding to the kernel function on $B_{R,0}$.  Thus our interest lies only with those polynomials with an attracting fixed point at $0$, even though the dagger conditions do not require $R$ to have such a property.  The first two cases of either dagger condition is quite easily characterized.

\begin{prop}
If $P(z)$ is a degree $n$ polynomial with an attracting fixed point at $0$ then the following hold:
\begin{enumerate}[a)]
\item $P(z)$ satisfies $\dagger_1$ and $\ddagger_1$.
\item $P(z)$ satisfies $\dagger_2$ and $\ddagger_2$ if and only if $a_{n-1} = 0$.
\end{enumerate}
\end{prop}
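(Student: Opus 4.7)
The plan is to use the Fundamental Theorem of Algebra together with Vieta's formulas; nothing deeper is required, since we are only analyzing the zeroth and first order sums of the preimages.

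For part (a), I would first observe that because $P$ has degree $n$ with leading coefficient $a_n \neq 0$, the polynomial $P(\zeta) - z$ in $\zeta$ has degree exactly $n$ for every $z \in \mathbb{C}$. By the Fundamental Theorem of Algebra, it therefore has $n$ roots counted with multiplicity, so $M(z) = n$ identically. Since the sums in the dagger conditions are formal sums over the roots counted with multiplicity, $\sum_{P(\zeta) = z} 1 = n = M(z)$, which simultaneously establishes $\dagger_1$ and $\ddagger_1$.

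For part (b), the attracting fixed point at $0$ forces $P(0) = 0$, hence $a_0 = 0$, and I would write
\[P(\zeta) - z = a_n \zeta^n + a_{n-1} \zeta^{n-1} + \cdots + a_1 \zeta - z,\]
a polynomial of degree $n$ in $\zeta$ whose two leading coefficients $a_n$ and $a_{n-1}$ are independent of $z$. Applying Vieta's formulas, the sum of its roots (with multiplicity) is $-a_{n-1}/a_n$, which is $0$ if and only if $a_{n-1} = 0$. This immediately yields $\ddagger_2$. For $\dagger_2$, the additional requirement $\sum_{P(\zeta) = z} \overline{\zeta} = 0$ follows by taking complex conjugates of $\sum_{P(\zeta) = z} \zeta = 0$, so it is automatic and the same characterization applies.

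Honestly, there is no significant obstacle here; the result is essentially a direct unpacking of Vieta. The only minor subtlety to flag is that the preimage sums must be read as sums over roots with multiplicity, so that the identity $M(z) = n$ and the Vieta computation both go through uniformly in $z$ rather than only on the generic fiber where $P$ is unramified.
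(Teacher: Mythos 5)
Your proposal is correct and takes essentially the same route as the paper: the Fundamental Theorem of Algebra gives $M(z)=n$ for part (a), and Vieta's formula for the sum of roots of $P(\zeta)-z$ (whose top two coefficients do not depend on $z$), together with conjugation, gives part (b). The only detail the paper adds is the observation that complete invariance of $B_{P,0}$ guarantees the preimages $\zeta$ lie in the basin, so the count of solutions used in $\dagger_1$ and $\ddagger_1$ is taken over admissible points; otherwise the two arguments coincide.
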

\begin{proof}
\noindent
\begin{enumerate}[a)]
\item Since $B_{P,0}$ is completely invariant with respect to $P$, we know that if $P(\zeta) = z$ for some $z\in B_{P,0}$, then we must have that $\zeta\in\Omega$.  By the Fundamental Theorem of Algebra there are $n$ solutions to $P(\zeta) = z$, counting multiplicity.  Thus $\sum\limits_{P(\zeta)=z}1 = n = M(z)$.

\item We have that $P(z)$ satisfies $\dagger_2$ and $\ddagger_2$ if and only if $\sum\limits_{P(\zeta)=z}\overline{\zeta} = 0 = \sum\limits_{P(\zeta)=z}\zeta$ which by Vieta's formulas is equivalent to $a_{n-1} = 0$.
\end{enumerate}
\end{proof}

\noindent\textbf{Note:} The conditions $\dagger_2$ and $\ddagger_2$ are equivalent to each other since $\sum\limits_{P(\zeta)=z}\overline{\zeta} = \overline{\sum\limits_{P(\zeta)=z}\zeta}$

The next proposition will complete our characterization of $\ddagger$.

\begin{prop}
A polynomial $P$ of degree $n\geq3$, with an attracting fixed point at $0$, satisfies $\ddagger_3$ if and only if $a_{n-2} = \dfrac{-na_n}{2}$
\end{prop}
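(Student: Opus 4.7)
My plan is to apply Vieta's formulas to the polynomial $Q(\zeta) := P(\zeta) - z$ together with Newton's identity, in order to express $\sum_{P(\zeta)=z}\zeta^2$ in closed form in terms of the top three coefficients of $P$, and then match it against $M(z) = n$.

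First, by part (a) of the preceding proposition, $M(z) = n$ for every $z \in B_{P,0}$, so $\ddagger_3$ reads $\sum_{i=1}^n \zeta_i^2 = n$, where $\zeta_1,\dots,\zeta_n$ are the roots of
\[
Q(\zeta) = a_n\zeta^n + a_{n-1}\zeta^{n-1} + a_{n-2}\zeta^{n-2} + \cdots + (a_0 - z).
\]
Vieta's formulas give the first two elementary symmetric functions of these roots,
\[
e_1 = -\frac{a_{n-1}}{a_n}, \qquad e_2 = \frac{a_{n-2}}{a_n},
\]
and crucially both are independent of $z$, since the only $z$-dependence in $Q$ lives in the constant term. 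Hence the left-hand side of $\ddagger_3$ is automatically a constant function of $z$, which is consistent with demanding the equality for all $z \in B_{P,0}$ at once.

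Next I would invoke Newton's identity $p_2 = e_1^2 - 2e_2$ to compute
\[
\sum_{i=1}^n \zeta_i^2 = \frac{a_{n-1}^2}{a_n^2} - \frac{2a_{n-2}}{a_n},
\]
and set this equal to $n$, obtaining $a_{n-1}^2 - 2a_n a_{n-2} = n a_n^2$. Since the surrounding discussion builds up a full characterization of $\ddagger$ and therefore carries over $\ddagger_2$ (equivalently $a_{n-1} = 0$), this collapses to $-2a_n a_{n-2} = n a_n^2$, i.e.\ $a_{n-2} = -na_n/2$, and both directions of the ``iff'' follow immediately.

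I do not anticipate a real obstacle; the argument is just a power-sum-to-elementary-symmetric conversion. The one point that warrants care is making explicit the implicit use of $\ddagger_2$, because without enforcing $a_{n-1} = 0$ the equation $a_{n-1}^2 - 2a_n a_{n-2} = na_n^2$ cuts out a curve in $(a_{n-1}, a_{n-2})$-space rather than the single hyperplane $\{a_{n-2} = -na_n/2\}$ stated in the proposition; flagging this dependence, rather than any hard computation, is the only subtlety.
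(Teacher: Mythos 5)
Your proposal is correct and follows essentially the same route as the paper: Vieta's formulas together with the Newton--Girard identity applied to $P(\zeta)-z$, whose $z$-dependence sits only in the constant term, so the power sum $\sum_{P(\zeta)=z}\zeta^2$ is constant in $z$. The subtlety you flag is genuine and worth keeping explicit: the paper's proof writes $\sum_{P(\zeta)=z}\zeta^2 = -2e_2$, which already presupposes $e_1=0$ (i.e.\ $\ddagger_2$, equivalently $a_{n-1}=0$), so the stated equivalence is only literally correct with that hypothesis carried along, exactly as your retained $e_1^2$ term makes visible.
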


\begin{proof}
The polynomial $P$ satisfies $\ddagger_3$ if and only if
\[\sum_{R(\zeta)=z}\zeta^2 = n\]
which is equivalent to $a_{n-2} = \frac{-na_n}{2}$, since Vieta's formulas and the Newton-Girard formulas give
\[n = \sum_{R(\zeta)=z}\zeta^2 = -2e_2 = \frac{-2a_{n-2}}{a_n}\]
\end{proof}

Combining the last two propositions, we may characterize those polynomials satisfying the $\ddagger$ condition:

\begin{thm}
Suppose $P$ is a polynomial of degree $n\geq 3$ with an attracting fixed point at $0$.  The polynomial $P$ satisfies $\ddagger$ if and only if $P(z) = \sum\limits_{k=1}^na_kz^k$ where $a_{n-1}=0$, and $a_{n-2}=\frac{-na_n}{2}$.
\end{thm}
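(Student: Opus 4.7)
The plan is simply to assemble the three sub-conditions $\ddagger_1$, $\ddagger_2$, $\ddagger_3$ using the two propositions already established, together with the observation that the hypothesis ``attracting fixed point at $0$'' forces $P(0)=0$, hence $a_0 = 0$, which is why the sum in the statement starts at $k=1$.

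First I would write $P(z) = \sum_{k=0}^n a_k z^k$ and note that $P(0)=0$ gives $a_0=0$, justifying the indexing in the statement. Then, by definition, $P$ satisfies $\ddagger$ precisely when all three of $\ddagger_1$, $\ddagger_2$, $\ddagger_3$ hold simultaneously, so it suffices to translate each of these into a condition on the coefficients.

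For the forward direction, suppose $P$ satisfies $\ddagger$. By Proposition 2(a), $\ddagger_1$ is automatic and imposes no coefficient condition. By Proposition 2(b), $\ddagger_2$ is equivalent to $a_{n-1}=0$. By Proposition 3 (which requires $n\geq 3$, matching our hypothesis), $\ddagger_3$ is equivalent to $a_{n-2} = -na_n/2$. So both coefficient conditions in the theorem statement follow. For the reverse direction, if $a_{n-1}=0$ and $a_{n-2}=-na_n/2$, then $\ddagger_2$ and $\ddagger_3$ hold by the same two propositions, and $\ddagger_1$ holds by Proposition 2(a) automatically, so $\ddagger$ is satisfied.

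There is essentially no obstacle here beyond bookkeeping: the two preceding propositions have already done all of the substantive work (Vieta applied to the $n$-th and $(n-1)$-st elementary symmetric functions of the preimages of $z$, combined with Newton--Girard for the power sum $\sum \zeta^2 = e_1^2 - 2e_2$). The only mild subtlety worth flagging is that the conditions are required to hold for all $z\in B_{P,0}$, but since the coefficient characterizations from Propositions 2 and 3 are independent of $z$, the conjunction is equivalent to the coefficient conditions holding outright.
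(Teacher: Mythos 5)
Your proposal is correct and matches the paper exactly: the paper states Theorem 4 with no separate proof beyond the remark that it follows by ``combining the last two propositions,'' which is precisely your assembly of Proposition 2 (for $\ddagger_1$ and $\ddagger_2 \Leftrightarrow a_{n-1}=0$) and Proposition 3 (for $\ddagger_3 \Leftrightarrow a_{n-2}=-na_n/2$). Your added remarks that $P(0)=0$ forces $a_0=0$ and that the coefficient conditions are independent of $z$ are sensible bookkeeping the paper leaves implicit.
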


\noindent\textbf{Note:} Since $P$ has an attracting fixed point at $0$, we have also that $|a_1|<1$.\\

Characterizing the $\dagger_3$ condition seems to be more challenging.  It is easy to find polynomials which satisfy $\dagger_3$ at a particular point.  But the dagger conditions are required to hold for all $z\in B_{P,0}$.  As such we introduce ``partial'' conditions, $\dagger^c$ and $\ddagger^c$, each meaning that the corresponding set of equations hold precisely at the point $c\in\mathbb{C}$.  The next proposition will help us determine which polynomials might satisfy $\dagger$.

\begin{prop}
Suppose $c\in B_{P,0}$.
\begin{enumerate}[a)]
\item If $P(z)$ satisfies $\ddagger^c$, then $P(z)$ satisfies $\ddagger$.
\item If $P(z)$ satisfies both $\dagger^c$ and $\ddagger^c$ then the equation $P(z)=c$ has only real solutions.
\end{enumerate}
\end{prop}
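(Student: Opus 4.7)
The strategy turns on a single observation: the sums $\sum \zeta$ and $\sum \zeta^2$ taken over the roots of $P(\zeta) = c$ depend only on the leading coefficients $a_n, a_{n-1}, a_{n-2}$ of $P$, and not on the value of $c$, whereas $\sum |\zeta|^2$ genuinely depends on $c$. This asymmetry makes part (a) almost immediate and reduces part (b) to a short real/imaginary part computation.

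For part (a), I would regard $P(\zeta) - c$ as a polynomial in $\zeta$ and note that varying $c$ only perturbs its constant term. By Vieta's formulas the first two elementary symmetric functions of its roots are $-a_{n-1}/a_n$ and $a_{n-2}/a_n$, both independent of $c$. The Newton--Girard identities then express the power sums $p_1 = \sum \zeta$ and $p_2 = \sum \zeta^2 = e_1^2 - 2 e_2$ purely in terms of these quantities. Consequently, if $\ddagger_2^c$ and $\ddagger_3^c$ hold at even one $c \in B_{P,0}$, they hold at every point of $B_{P,0}$. Combined with the fact that $\ddagger_1$ is automatic (Proposition 2(a)), this delivers the full condition $\ddagger$.

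For part (b), I would combine the two identities $\sum \zeta^2 = n$ (from $\ddagger_3^c$) and $\sum |\zeta|^2 = n$ (from $\dagger_3^c$). Writing each root as $\zeta_k = x_k + i y_k$ with $x_k, y_k \in \mathbb{R}$, the first identity (taking real parts; the imaginary part is handled by $\dagger_2^c$/$\ddagger_2^c$ being automatic from $a_{n-1}=0$) yields $\sum (x_k^2 - y_k^2) = n$, while the second directly gives $\sum (x_k^2 + y_k^2) = n$. Subtracting one from the other forces $\sum y_k^2 = 0$, and since each $y_k$ is real this means $y_k = 0$ for every $k$; hence every solution of $P(z) = c$ is real.

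I do not anticipate a serious obstacle in either part. The only content is noticing in (a) that the relevant power sums are constant in $c$ (so a single-point hypothesis globalizes for free), and in (b) that the difference of $\sum \zeta^2$ and $\sum |\zeta|^2$ isolates the imaginary parts of the roots. Both are one-line observations once the algebraic form of the Vieta/Newton--Girard expressions is in hand.
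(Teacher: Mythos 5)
Your proposal is correct and follows essentially the same route as the paper: part (a) rests on Vieta and Newton--Girard showing the relevant power sums depend only on $a_n, a_{n-1}, a_{n-2}$ and not on $c$, and part (b) compares $\sum\zeta^2$ with $\sum|\zeta|^2$. Your subtraction giving $\sum y_k^2 = 0$ is just an equivalent (and if anything cleaner) rephrasing of the paper's termwise inequality $\mathrm{Re}\,\zeta^2 \le |\zeta^2|$, and the parenthetical appeal to $a_{n-1}=0$ in (b) is unnecessary since $\sum\zeta^2 = n$ is already real by hypothesis.
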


\begin{proof}
\noindent\\
\begin{enumerate}[a)]
\item If $P(z)$ satisfies $\ddagger^c$, then we must have that
\[\sum_{P(\zeta)=c}1 = n,\quad \sum_{P(\zeta)=c}\zeta = 0,\quad \sum_{P(\zeta)=c}\zeta^2 = n\]
By Vieta's formulas, we have that $a_{n-1} = 0$.  Thus we have that $\ddagger_1$ and $\ddagger_2$ hold by an application of Proposition 1.  By an application of the Newton-Girard formulas we have that
\[n = \sum_{P(\zeta)=c}\zeta^2 = -2e_2 = \frac{-2a_{n-2}}{a_n}\]
Applying the same formula again we find that
\[\sum_{P(\zeta)=z}\zeta^2 = \frac{-2a_{n-2}}{a_n} = n\]
So $P$ actually satisfies $\ddagger$.

\item Suppose that $P(z)$ satisfies both $\dagger^c$ and $\ddagger^c$, so in particular, we have that
\[\sum_{R(\zeta)=c}\zeta^2 = \sum_{R(\zeta)=c}|\zeta^2|\]
which requires that $\sum\limits_{R(\zeta)=c}\text{Im }\zeta^2 = 0$.  Thus we have that
\[\sum_{R(\zeta)=c}\text{Re }\zeta^2 = \sum_{R(\zeta)=c}|\zeta^2|\]
If $\zeta^2$ is not real, then $|\zeta^2|>\text{Re }\zeta^2$, so the above equality holds only if all solutions to $P(\zeta)=c$ are real.
\end{enumerate}
\end{proof}

So if $P(z)$ has a non-real zero, then $P(z)$ cannot satisfy both $\dagger$ and $\ddagger$.  The only interesting polynomials which might satisfy $\dagger$ are those $P(z)$ which have the property that $\ddagger^c$ is not satisfied for any $c\in\Omega$.  Such a polynomial requires the property that $P(z)+c$ has at least one non-real zero for all $c\in\Omega$.  An example of a polynomial with this property is any cubic of the form $P(z) = az^3 + bz + c$ with $b$ a scalar multiple of $\overline{a}$.  We show now that no polynomial can satisfy both $\dagger$ and $\ddagger$.  This fact can be used in turn to show that the aforementioned property is a necessary condition for a polynomial to satisfy $\dagger$.\\

\begin{prop}
If $P$ is a polynomial with an attracting fixed point at $0$, then $P$ cannot satisfy both $\dagger$ and $\ddagger$.
\end{prop}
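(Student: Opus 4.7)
The plan is to argue by contradiction, leveraging Proposition 4(b) pointwise over the entire basin. First I would assume that $P$ satisfies both $\dagger$ and $\ddagger$. Since both families of equations then hold at every $c \in B_{P,0}$, the hypotheses of Proposition 4(b) are met at each such $c$, which forces every solution $\zeta$ of $P(\zeta) = c$ to be real. Packaging these pointwise statements together, one obtains the single set-theoretic inclusion $P^{-1}(B_{P,0}) \subseteq \R$.

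Next I would invoke two standard facts from complex dynamics: the basin of an attracting fixed point is open in $\C$, and it is completely invariant under $P$, so $P^{-1}(B_{P,0}) = B_{P,0}$. Combining these with the inclusion above exhibits $B_{P,0}$ as a nonempty open subset of $\C$ (it contains a neighborhood of the fixed point $0$) that is simultaneously contained in $\R$.

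The contradiction is then immediate: $\R$ has empty interior in $\C$, so no nonempty open subset of $\C$ can sit inside $\R$. I do not anticipate any serious obstacle in executing this outline. The only step requiring any care is verifying that Proposition 4(b) may be invoked uniformly across the basin, which follows automatically from the standing assumption that $\dagger$ and $\ddagger$ hold at every $c \in B_{P,0}$; once the inclusion $P^{-1}(B_{P,0}) \subseteq \R$ is in hand, the topological argument closes the proof in a single line.
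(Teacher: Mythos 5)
Your proposal is correct and follows essentially the same route as the paper: invoke the earlier proposition pointwise to get $P^{-1}(B_{P,0})\subseteq\R$, then derive a contradiction from openness, since $\R$ has empty interior in $\C$. The only cosmetic difference is that you obtain openness of the preimage via complete invariance ($P^{-1}(B_{P,0})=B_{P,0}$, nonempty since it contains a neighborhood of $0$), whereas the paper simply uses continuity of $P$; your explicit nonemptiness remark is in fact a slight tightening of the paper's wording.
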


\begin{proof}
Suppose to the contrary that $P$ satisfies both $\dagger$ and $\ddagger$.  By Proposition 5, $P(z)=c$ has only real solutions, for any $c\in B_{P,0}$.  Thus the inverse image of $B_{P,0}$ under $P$ must be a subset of $\mathbb{R}$. However, $B_{P,0}$ is an open set, and $P$ is a continuous map, so that the inverse image of $B_{P,0}$ under $P$ must be an open set.  But no subset of $\mathbb{R}$ is open as a subset of $\mathbb{C}$.  Thus it cannot be that $P$ satisfies both $\dagger$ and $\ddagger$.
\end{proof}

\noindent We can now state a necessary condition for a polynomial to satisfy $\dagger$.

\begin{prop}
If $P$ is a polynomial with an attracting fixed point at $0$ that satisfies $\dagger$, then for all $c\in B_{R,0}$, the equation $P(z) = c$ has at least one non-real solution.
\end{prop}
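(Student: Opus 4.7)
The plan is to argue by contradiction using the already-established Propositions 5 and 6. Assume $P$ satisfies $\dagger$ and yet, for some $c \in B_{P,0}$, every solution of $P(\zeta) = c$ is real. I will show this forces $P$ to satisfy $\ddagger$ globally, contradicting Proposition 6.

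First, since $P$ satisfies $\dagger$, it satisfies $\dagger^{c}$ at this particular $c$; in particular $\dagger_1^c$, $\dagger_2^c$, and $\dagger_3^c$ all hold. Now $\dagger_1^c$ is literally the same statement as $\ddagger_1^c$, and the Note after Proposition 1 observes that $\dagger_2$ and $\ddagger_2$ are equivalent (since the two sums are complex conjugates of each other); so $\ddagger_1^c$ and $\ddagger_2^c$ are immediate. For $\ddagger_3^c$, I use the hypothesis that every $\zeta$ with $P(\zeta)=c$ is real: then $\zeta^2 = |\zeta|^2$ termwise, and $\dagger_3^c$ rewrites as
\[\sum_{P(\zeta)=c}\zeta^2 \;=\; \sum_{P(\zeta)=c}|\zeta|^2 \;=\; n,\]
which is exactly $\ddagger_3^c$. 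Hence $\ddagger^c$ is satisfied at this point.

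Next, apply Proposition 5(a): as soon as $\ddagger^c$ holds at a single $c \in B_{P,0}$, the polynomial $P$ satisfies $\ddagger$ everywhere on $B_{P,0}$. Thus $P$ satisfies both $\dagger$ and $\ddagger$, which directly contradicts Proposition 6. Therefore no such $c$ can exist, and for every $c \in B_{P,0}$ the equation $P(z)=c$ must have at least one non-real solution.

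There is no real obstacle here; the proposition is a short corollary obtained by chaining Proposition 5(a) with Proposition 6 and noting that, at a point where all preimages are real, the $\dagger$ data automatically upgrade to the $\ddagger$ data. The only thing to be careful about is the direction of the upgrade in step two, namely that reality of the roots is what collapses $\sum \zeta^2$ onto $\sum |\zeta|^2$ so that $\dagger_3^c$ yields $\ddagger_3^c$ rather than the reverse.
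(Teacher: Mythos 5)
Your proof is correct and follows essentially the same route as the paper: assume all preimages of some $c$ are real, use $\zeta^2=|\zeta|^2$ to upgrade $\dagger_3^c$ to $\ddagger_3^c$, conclude $P$ satisfies $\ddagger$, and contradict the incompatibility of $\dagger$ and $\ddagger$. If anything, your version is slightly tighter than the paper's, since you explicitly invoke Proposition 5(a) to pass from $\ddagger^c$ at a single point to $\ddagger$ globally, a step the paper leaves implicit.
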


\begin{proof}
Suppose that $P$ satisfies $\dagger$, and to the contrary, that $P(z) = c$ has only real solutions.  If $\zeta$ is such a solution, then $|\zeta|^2 = \zeta^2$.  Since $P$ satisfies $\dagger_3$, we have also that, by the previous observation, that $P$ satisfies $\ddagger_3$.  By Proposition 2, $P$ also satisfies $\ddagger_1$ and $\ddagger_2$, so that $P$ must satisfy $\ddagger$.  But $P$ cannot satisfy both $\dagger$ and $\ddagger$, so that $P(z) = c$ must have at least one non-real solution.
\end{proof}

Another approach to showing that a polynomial satisfies one of the dagger conditions is to write it as a product of two polynomials, each of which satisfy the same dagger condition.  This is equivalent to determining whether the product of two polynomials, both satisfying the same dagger condition, will satisfy a dagger condition. This works quite well for the $\ddagger$ condition.

\begin{prop}
If $R(z)$ and $Q(z)$ satisfy $\ddagger$, then $R(z)Q(z)$ satisfies $\ddagger$.
\end{prop}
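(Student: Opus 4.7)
The plan is to invoke the characterization of $\ddagger$ given by Theorem 4: a degree $n\geq 3$ polynomial with an attracting fixed point at $0$ satisfies $\ddagger$ if and only if its two subleading coefficients satisfy $a_{n-1}=0$ and $a_{n-2}=-na_n/2$. Writing $R(z)=\sum_{k} r_k z^k$ of degree $m$ and $Q(z)=\sum_{k} q_k z^k$ of degree $\ell$, the product $P=RQ$ has degree $n=m+\ell$, and the task reduces to verifying these two coefficient identities for $P$ from the assumed identities on $R$ and $Q$.

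First I would confirm that the attracting-fixed-point hypothesis transfers to $P$: both $R(0)$ and $Q(0)$ vanish, so $P(0)=0$, and the product rule gives $P'(0)=R'(0)Q(0)+R(0)Q'(0)=0$, so in particular $|P'(0)|<1$. The main step is then a short expansion of the top three coefficients of $RQ$ via the Cauchy product formula. The leading coefficient is $a_n=r_m q_\ell$. The coefficient $a_{n-1}$ is a two-term sum $r_m q_{\ell-1}+r_{m-1} q_\ell$, each term of which vanishes by the assumed vanishing of $r_{m-1}$ and $q_{\ell-1}$. The coefficient $a_{n-2}$ is a three-term sum whose middle term $r_{m-1} q_{\ell-1}$ vanishes and whose outer terms, using $r_{m-2}=-mr_m/2$ and $q_{\ell-2}=-\ell q_\ell/2$, combine to $-(m+\ell) r_m q_\ell/2 = -n a_n/2$, matching the required identity for $P$.

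There is essentially no obstacle beyond this bookkeeping. The only point worth flagging is the degree assumption: since each of $R$ and $Q$ has degree at least $3$ in order for Theorem 4 to apply to them individually, the product has degree $n=m+\ell\geq 6$, so Theorem 4 applies in reverse to $P$ and yields $\ddagger$ for $P$ from the two verified coefficient conditions.
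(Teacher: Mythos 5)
Your proposal is correct and follows essentially the same route as the paper: both reduce the claim to the coefficient characterization of $\ddagger$ in Theorem 4 and verify, via the top three coefficients of the Cauchy product, that $c_{n-1}=0$ and $c_{n-2}=-nc_n/2$ for the product. Your write-up is in fact slightly more careful than the paper's, since you also check that the attracting fixed point at $0$ transfers to $RQ$ and note the degree bound $n=m+\ell\geq 6$ needed to apply Theorem 4 in reverse.
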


\begin{proof}
Suppose that $R$ and $Q$ both satisfy $\ddagger$ and let $S(z) = R(z)Q(z)$.  Let $a_i$ denote the coefficients of $R$, $b_i$ denote the coefficients of $Q$, and suppose $\deg R = r$, $\deg Q = q$ so that $\deg S = r + q$.  Theorem 4 tells us that $a_{r-2}=\dfrac{-ra_r}{2} $, $b_{q-2}=\dfrac{-rb_q}{2}$, and $a_{r-1}=0=b_{q-1}$. If $c_i$ denotes the coefficients of $S$, then we have that $c_{r+q}=a_rb_q$, $c_{r+q-1}=0$, and $c_{r+q-2}=\dfrac{-(r+q)c_{r+q}}{2}$.  By Theorem 4, the polynomial $S$ satisfies the $\ddagger$ condition.
\end{proof}

\section{Examples}
Here we present an example of a polynomial satisfying $\ddagger$, and an example of a polynomial satisfying $\dagger^0$.

\begin{ex}
Consider the polynomial $R(z)=iz^4 - 2iz^2 - \frac{1+i}{2}z$.  We see that $0$ is an attracting fixed point of $R$ since $R(0) =0 $ and $|R^{'}(0)|=\dfrac{\sqrt{2}}{2}<1$.  So we have that the map
\[K(z,w) = \prod_{n=0}^\infty \Big(1+R^{\circ n}(z)\overline{R^{\circ n}(w)}\Big)\]
is a kernel function on $B_{R,0}$.  The polynomial $R$ has coefficients:  $a_4 = i$, $a_2 = -2i$, $a_1=-\frac{1+i}{2}$, and $a_3=a_0=0$.  Since $a_3 = 0$ and $ a_2 = \frac{-4 a_4}{2}$, Theorem 4 tells us that $R$ satisfies $\ddagger$.  This in turn shows that the operators $S_1$ and $S_2$, defined by:
\[S_1f(z) = f(R(z))\qquad\text{and}\qquad S_2f(z)=zf(R(z))\]
satisfy the Cuntz relations.  So we may apply Theorem 1 to conclude that the functions $b_v(z)$ form an ONB for the Hilbert space associated to $K$.  Recall that
\[b_v(z) = (S_{v_1}S_{v_2}\cdots S_{v_N}\mathbf{1})(z)\]
where $v\in J^\infty$.  The first few basis elements are:
\[1,z,R(z),zR(z), R^{\circ 2}(z), zR^{\circ 2}(z), R(z) R^{\circ 2}(z), zR(z) R^{\circ 2}(z), \dots\]
So the basis elements may be calculated recursively, but obtaining a general formula appears to require a general formula for $R^{\circ n}$.
\begin{figure}[t]
\centering
\includegraphics[scale=.15]{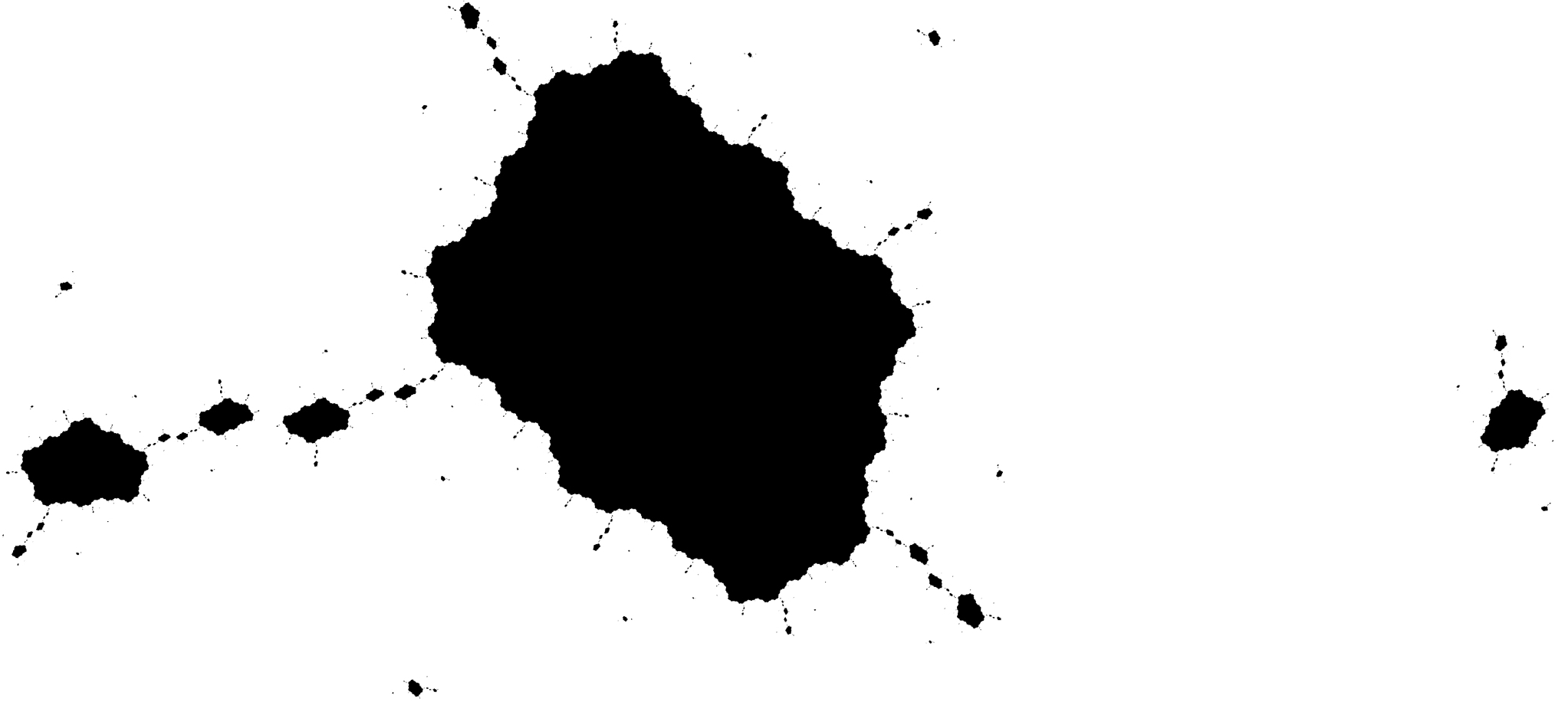}
\caption{The domain of K(z,w): $B_{R,0}$}
\end{figure}
\end{ex}

\begin{ex}
Consider the polynomial $Q(z) = \frac{1}{2}z^3 + \frac{3}{4}z$ which also has an attracting fixed point at $0$.  So the map
\[K(z,w) = \prod_{n=0}^\infty \Big(1+Q^{\circ n}(z)\overline{Q^{\circ n}(w)}\Big)\]
is a kernel function on $B_{Q,0}$.  There is a RKHS associated to $K$, however, we can't use the dagger conditions to construct an ONB.  By Theorem 4, $Q$ doesn't satisfy $\ddagger$, in particular, the condition $\ddagger_3$.  It turns out that $Q$ does satisfy $\dagger^0$; this follows from Proposition 2 and the following observation:
\[\sum_{Q(\zeta)=0}|\zeta|^2 = \Big|0\Big|^2 + \Big|i\sqrt{3/2}\Big|^2 + \Big|-i\sqrt{3/2}\Big|^2=3\]
It can be shown in a similar fashion that $Q$ satisfies $\dagger^c$ for $c = \dfrac{i}{2\sqrt{2}}$.  However $Q$ doesn't satisfy $\dagger^c$ for all $c\in B_{Q,0}$: for $c=i\in B_{Q,0}$ and with the aid of WolframAlpha, we have that
\[\sum_{Q(\zeta)=i}|\zeta|^2 >3\]
 
\begin{figure}[t]
\centering
\includegraphics[scale=.05]{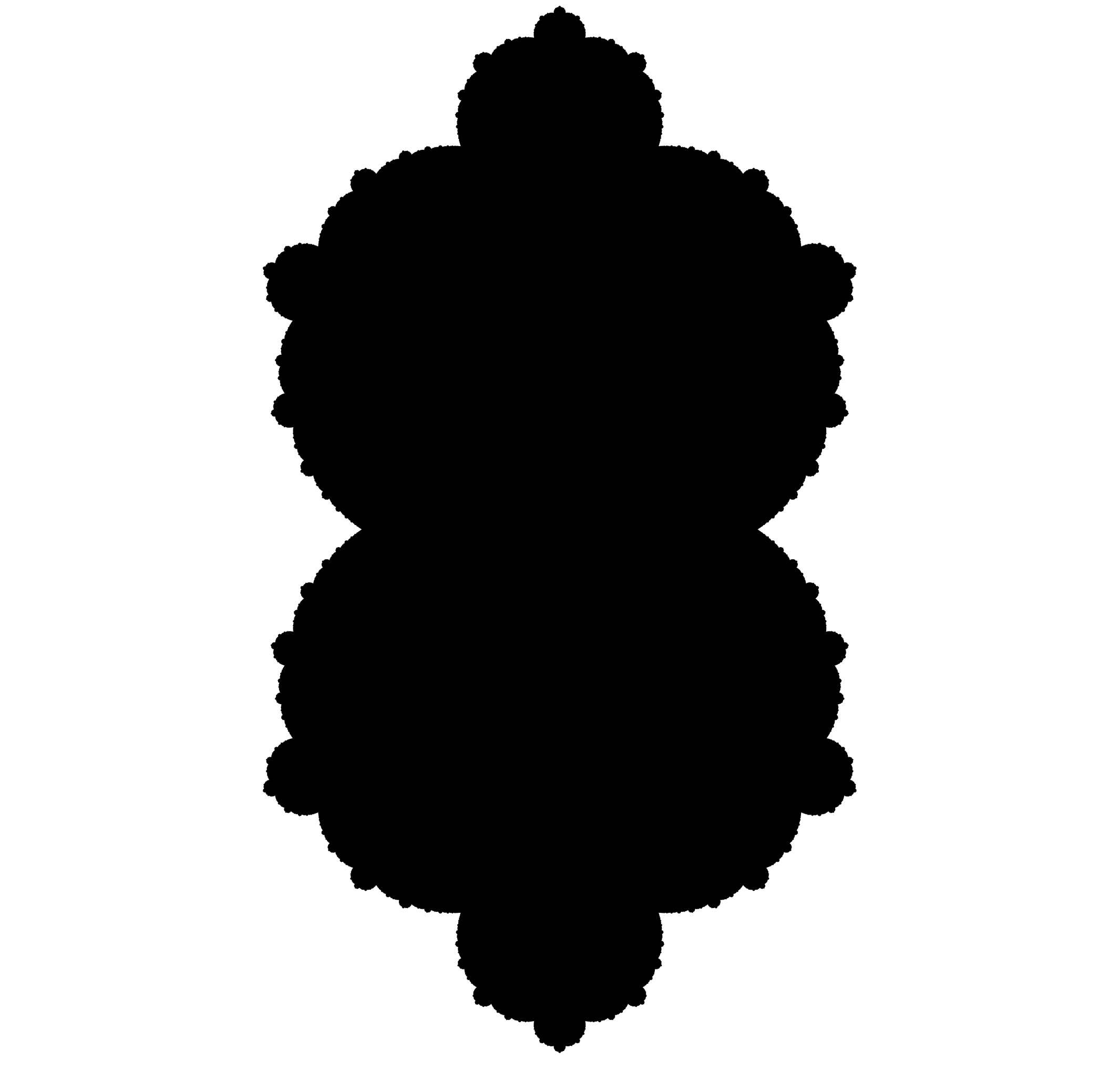}
\caption{The domain of K(z,w): $B_{Q,0}$}
\end{figure}
\end{ex}

\noindent

\section{Open Questions}
Here we discuss some open questions pertaining to this paper.

\begin{enumerate}
\item  \textbf{Classify the } $\dagger$ \textbf{condition}.  The main issue lies with $\dagger_3$.  Since the sum involves the modulus of the roots, Vieta's formula may not be applied.
\item \textbf{Find a polynomial that satisfies} $\dagger_3$ \textbf{or show that no polynomial satifies} $\dagger_3$.  Just having one example would be a nice starting point; but so would knowing that no examples exist. 
\item \textbf{What if we don't count multiplicity?}  Much of the theory presented in [1] should still work if we don't count the multiplicity of the solutions to $P(z) = c$.  The biggest issue with this change would be in the application of Vieta's formula, since it does use multiplicity.
\item \textbf{Classify polynomials satisfying $\dagger^0$}.  This could be another starting point for  classifying the $\dagger$ condition.  Understanding when $\dagger^0$ is satisfied could help to understand when $\dagger^c$ is satisfied.
\item \textbf{Generalize the results presented here to other ``underlying'' kernel functions}.  There are other underlying kernel functions one could use other than $1 + z\overline{w}$.  However, changing the underlying kernel function will change the dagger conditions.  There are some kernel functions for which the approach presented here might still work, in particular, kernel functions of the form $1 + (z\overline{w})^n$, where $n$ is a positive integer.
\end{enumerate}

\section{Appendix: Newton-Girard Identities and the Vieta Formula}
Here we take a brief look at the Newton-Girard identities and Vieta's formula; both quintessential tools in this paper.  See [2] for a more in depth historical introduction.

\begin{thm}[Vieta's formula]
Suppose $P(z) = \sum\limits_{j=0}^na_jz^j$ satisfies $a_n\neq 0$. If $z_1,\dots,z_n$ are the roots (counting multiplicity) of $P$, then
\[\sum_{1\leq j_1<\cdots<j_k\leq n}z_{i_1}\cdots z_{i_k} = \frac{(-1)^ka_{n-k}}{a_n}\]
\end{thm}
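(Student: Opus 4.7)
The plan is to prove Vieta's formula by factoring $P$ over $\mathbb{C}$ using its roots, expanding the resulting product, and matching coefficients of like powers of $z$ with the given expansion $P(z) = \sum_{j=0}^n a_j z^j$. The main conceptual input is the Fundamental Theorem of Algebra (which guarantees the factorization) and the combinatorial identity expressing the expansion of $\prod_i (z - z_i)$ in terms of elementary symmetric polynomials.

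First I would write $P(z) = a_n \prod_{i=1}^n (z - z_i)$, which is legitimate since $a_n \neq 0$ and $z_1, \dots, z_n$ are the roots listed with multiplicity. Next I would expand the product. The claim to establish is
\[\prod_{i=1}^n (z - z_i) = \sum_{k=0}^n (-1)^k e_k(z_1, \dots, z_n)\, z^{n-k},\]
where $e_k$ denotes the $k$-th elementary symmetric polynomial
\[e_k(z_1, \dots, z_n) = \sum_{1 \leq j_1 < j_2 < \cdots < j_k \leq n} z_{j_1} z_{j_2} \cdots z_{j_k}.\]
This combinatorial identity can be proved by induction on $n$: the base case $n=1$ is immediate, and the inductive step follows by multiplying the expansion for $n-1$ by $(z - z_n)$ and using the Pascal-type recursion $e_k(z_1, \dots, z_n) = e_k(z_1, \dots, z_{n-1}) + z_n\, e_{k-1}(z_1, \dots, z_{n-1})$. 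Alternatively, one can argue directly: each term in the fully distributed product is obtained by selecting, for every factor $(z - z_i)$, either $z$ or $-z_i$; choosing $-z_i$ at exactly the indices $j_1 < \cdots < j_k$ contributes $(-1)^k z_{j_1} \cdots z_{j_k} z^{n-k}$, and summing over all such index subsets gives the desired expression.

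Finally I would match coefficients: multiplying the expansion by $a_n$ and comparing with $P(z) = \sum_{j=0}^n a_j z^j$, the coefficient of $z^{n-k}$ on the factored side is $a_n (-1)^k e_k(z_1, \dots, z_n)$, while on the original side it is $a_{n-k}$. Equating and solving for $e_k$ yields
\[\sum_{1 \leq j_1 < \cdots < j_k \leq n} z_{j_1} \cdots z_{j_k} = \frac{(-1)^k a_{n-k}}{a_n},\]
which is precisely the statement of Vieta's formula. The only real obstacle is establishing the symmetric-function expansion cleanly; the induction above is routine, and no part of the argument requires anything beyond the existence of the root factorization and polynomial coefficient uniqueness.
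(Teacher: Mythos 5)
Your proposal is correct and follows essentially the same route as the paper: factor $P(z)=a_n\prod_{i=1}^n(z-z_i)$, expand the product in terms of elementary symmetric polynomials, and equate coefficients. The only difference is that you spell out the justification of the expansion (by induction or by the direct selection argument), which the paper simply asserts.
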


\begin{proof}
By assumption we have that
\[\sum_{j=0}^na_jz^j = a_n\prod_{j=1}^n(z-z_j) = a_nz^n + a_n\sum_{k=1}^n\Big((-1)^k\sum_{1\leq j_1<\cdots<j_k\leq n}z_{i_1}\cdots z_{i_k}\Big)z^{n-k}\]
equating coefficients yields Vieta's formula.
\end{proof}

The Newton-Girard identities involve symmetric polynomials, so we start with a few definitions and notational conventions.  The $k$th power sum in $n$ variables is the polynomial $p_{k,n} = \sum\limits_{i=1}^nz_i^k$.  The elementary symmetric polynomials in $n$ variables are defined by
\[e_0 = 1,\quad e_1 = \sum_{i=1}^nz_i,\quad e_2 = \sum_{1\leq i<j\leq n}z_iz_j,\quad \dots,\quad e_n = z_1z_2\cdots z_n,\quad e_k = 0\quad\forall k>n\]
\begin{thm}[Newton-Girard identities]
Let $e_m$ denote the $m$th elementary symmetric poliynomial in $n$ variables and $P_m$ denote the $m$th power sum in $n$ variables.  We have that
\[\sum_{l=0}^{k-1}\Big((-1)^le_l P_{k-l}\Big) + (-1)^kke_k = 0\quad\forall k,n\in\mathbb{N}\]
\end{thm}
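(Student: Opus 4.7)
The plan is to prove the Newton-Girard identities via a generating function computation, since the identity naturally arises as an equality of formal power series coefficients once one relates the elementary symmetric polynomials and the power sums through a logarithmic derivative.

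First, I would introduce the generating function
\[E(t)=\prod_{i=1}^n(1+z_i t).\]
Expanding the product and collecting terms in $t$ (exactly as in the proof of Vieta's formula, but with $t$ in place of $z$) gives $E(t)=\sum_{k=0}^n e_k t^k$. This is the first key identity, and it packages all the elementary symmetric polynomials into a single polynomial in $t$.

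Next, I would compute the logarithmic derivative $E'(t)/E(t)$ in two different ways. On one hand, differentiating $\log E(t)=\sum_{i=1}^n \log(1+z_i t)$ term-by-term yields
\[\frac{E'(t)}{E(t)}=\sum_{i=1}^n\frac{z_i}{1+z_i t}=\sum_{i=1}^n z_i\sum_{m=0}^\infty(-z_i t)^m=\sum_{m=0}^\infty(-1)^m P_{m+1}\,t^m,\]
where in the last step I recognize the inner sum over $i$ as the power sum $P_{m+1}$. On the other hand, $E'(t)=\sum_{k=1}^n k e_k t^{k-1}$ directly. Cross-multiplying gives the formal-series identity
\[\sum_{k=1}^n k e_k t^{k-1}=\Bigl(\sum_{l=0}^n e_l t^l\Bigr)\Bigl(\sum_{m=0}^\infty(-1)^m P_{m+1}t^m\Bigr).\]

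Finally, I would equate the coefficient of $t^{k-1}$ on both sides for each $k\geq 1$. On the left this coefficient is simply $k e_k$, while on the right it is $\sum_{l=0}^{k-1}(-1)^{k-1-l}e_l P_{k-l}$. Rearranging and factoring out the sign $(-1)^{k-1}$ (using $(-1)^{-l}=(-1)^l$) produces exactly
\[\sum_{l=0}^{k-1}(-1)^l e_l P_{k-l}+(-1)^k k e_k=0,\]
which is the claimed identity. I expect the only real difficulty to be bookkeeping of signs and indices in this last coefficient-extraction step; there is no deeper obstruction, since the logarithmic-derivative trick does all the combinatorial work for us. (For $k>n$, one notes $e_k=0$ by convention and the generating-function argument goes through unchanged, covering the remaining cases uniformly.)
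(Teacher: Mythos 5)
Your proof is correct, but it takes a genuinely different route from the paper. The paper works directly with the factorization $\prod_{j=1}^n(z-z_j)=\sum_{l=0}^n(-1)^le_lz^{n-l}$, substitutes each root $z=z_h$ to get zero, and sums over $h$; this immediately yields the identity only in the case $k=n$, and the remaining cases are then obtained by a specialization argument (setting some of the variables to $0$), which the paper treats rather tersely. You instead package the $e_k$ into the generating polynomial $E(t)=\prod_{i=1}^n(1+z_it)$, compute $E'(t)/E(t)$ both by the logarithmic-derivative/geometric-series expansion (producing the power sums) and termwise from $E(t)=\sum_k e_kt^k$, and extract the coefficient of $t^{k-1}$; your sign and index bookkeeping checks out, and the convention $e_k=0$ for $k>n$ makes the argument cover all $k$ and $n$ in one stroke. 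What your approach buys is exactly this uniformity --- no case split on $k$ versus $n$ and no specialization step --- plus a framework that generalizes readily (e.g.\ to identities involving complete homogeneous symmetric polynomials). What it costs is the formal-power-series apparatus: to keep everything algebraic you should either justify the formal logarithm or, more simply, bypass it by computing $E'(t)=\sum_i z_i\prod_{j\neq i}(1+z_jt)$ with the product rule and dividing by $E(t)$, which is invertible as a formal series since its constant term is $1$; the expansion $\frac{1}{1+z_it}=\sum_{m\geq 0}(-z_it)^m$ is then a legitimate formal identity and the rest of your computation is purely coefficient comparison. The paper's argument is more elementary in that it never leaves polynomial algebra, but it pays for this with the case analysis your method avoids.
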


\begin{proof}
From the proof of Vieta's formula we have
\[\prod_{j=1}^n(z-z_j) = z^n + \sum_{l=1}^n\Big((-1)^l\sum_{1\leq j_1<\cdots<j_l\leq n}z_{i_1}\cdots z_{i_l}\Big)z^{n-l} = \sum_{l=0}^n(-1)^le_lz^{n-l}\]
where we now think of the $z_j$ as free variables.  Suppose $k = n$, set $z = z_h$ where $h\in\{1, \dots, k\}$ to obtain
\[0 = \sum_{l=0}^ke_lz_h^{k-l} = \sum_{l=0}^{k-1}\Big((-1)^le_l z_h^{k-l}\Big) + (-1)^ke_k\]
Now sum the right hand side over $h$ to obtain:
\[0 = \sum_h\left(\sum_{l=0}^{k-1}\Big((-1)^le_l z_h^{k-l}\Big) + (-1)^ke_k\right) = \sum_{l=0}^{k-1}\Big((-1)^le_l P_{k-l}\Big) + (-1)^kke_k\]
Demonstrating the identity for $k = n$.  The case $n<k$ follows from setting $k-n$ of the $z_h$ to $0$ and the case $k<n$ follows from setting $n-k$ of the $z_h$ to $0$.
\end{proof}

The first two Newton-Girard identities are:

\begin{enumerate}
\item For $k = 1$:  $P_1 - e_1 = 0$
\item For $k = 2$:  $P_2 - e_1P_1 + 2e_2 = 0 $
\end{enumerate}
Solving the second formula for $P_2$, we obtain $P_2 = e_1P_1 - 2e_2$.  Now let $n$ be the degree of some polynomial $R$ that satisfies $\dagger_2$ or $\ddagger_2$, and plug the $n$ roots (counting multiplicity) of $R$ into $P_2$.  The identity becomes $P_2 = -2e_2$ since $\dagger_2$ (or $\ddagger_2$) implies that $e_1 = P_1 = 0$.  By Vieta's formula, we have that $e_2 = \frac{a_{n-2}}{a_n}$.  So we may conclude by stating that
\[P_2 = \sum_{R(\zeta)=0}\zeta^2=\frac{-2a_{n-2}}{a_n}\]
Although we only made use of the first two identities here, the other identities will likely be of use when the underlying kernel function is changed.

\end{document}